\theoremstyle{plain}
\newtheorem{theorem}{Theorem}
\newtheorem{corollary}{Corollary}
\newtheorem{lemma}{Lemma}
\theoremstyle{remark}
\newtheorem{remark}{Remark}
\numberwithin{equation}{section}
\begin{document}

\title[Approximation by a special de la Vall\'ee Poussin\dots]{Approximation by a special de la Vall\'ee Poussin type matrix transform mean of Walsh-Fourier series} 

\author{Istv\'an Blahota}
\address{Institute of Mathematics and Computer Sciences\\
	University of Ny\'\i regyh\'aza\\
	H-4400 Ny\'\i regyh\'aza, S\'ost\'oi street 31/b\\
	Hungary}

\email{blahota.istvan@nye.hu}

\thanks{}

\begin{abstract}
	In this paper, we consider norm convergence for a special matrix-based de la Vall\'ee Poussin-like mean of Fourier series for the Walsh system. We estimate the difference between the named mean above and the corresponding function in norm, and the upper estimation is given by the modulus of continuity of the function.
\end{abstract}

\subjclass{42C10}

\keywords{character system, Fourier	series, Walsh-Paley system, rate of approximation, modulus of continuity, matrix transform}

\maketitle

\section{Definitions and notations}
We follow the standard notions of dyadic analysis introduced by F. Schipp, W. R. Wade, P. Simon, and J. P\'al \cite{SWSP} and others. 

\section{Definitions and notation}
Let $\mathbb P$ be the set of positive natural numbers and ${\mathbb N}:={\mathbb P}\cup \{0\}$.
Let denote by ${\mathbb Z}_2$ the discrete cyclic group of order
2, the group operation is the modulo $2$ addition. Let be every subset
open. The normalized Haar measure $\mu$ on ${\mathbb Z}_{2}$ is given
in the way that $\mu (\{ 0\}):=\mu (\{ 1\}):=1/2$. $G :=
\overset{\infty}{\underset{k=0}{{\times}}} {\mathbb Z}_{2},\ G$
is called the Walsh group. The elements of Walsh group $G$ are sequences of numbers 0 and 1, that is 
$x=(x_{0},x_{1},\dots,x_{k},\dots)$ with $x_{k}\in \{0,1\}\ (k\in {\mathbb
	N}).$

The group operation on $G$ is the coordinate-wise addition (denoted by $+$),
the normalized Haar measure $\mu $ is the product measure and the topology is the product topology.
Dyadic intervals are defined in the usual way 
\[
I_{0}(x):=G,\ I_{n}(x):=\{y\in G:y=(x_{0},\dots,x_{n-1},y_{n},y_{n+1},\dots)\}
\]
for $x\in G, n\in {\mathbb P}$. They form a base for the neighbourhoods of $G$.
Let $0:=(0:i\in
{\mathbb N})\in G$ denote the null element of $G$ and $I_{n}:=I_{n}(0)$ for $n\in {\mathbb N}$.

Let $L_{p}(G)$ denote the usual Lebesgue spaces on $G$ (with the
corresponding norm $\Vert .\Vert_p$), where $1\leq p<\infty$. 

For the sake of brevity in notation, we agree to write $L_{\infty}(G)$ instead of $C(G)$ and set $\Vert f\Vert_\infty:=\sup\{ \lvert f(x)\rvert : x\in G\}.$ Of course, it is clear that the space $L_{\infty}(G)$ is not the same as the space of continuous functions, i.e. it is a proper subspace of it. But since in the case of continuous functions the supremum norm and the $L_{\infty}(G)$ norm are the same, for convenience we hope the reader will be able to tolerate this simplification in notation.

Next, we define the modulus of continuity in $L_{p}(G), 1\leq p\leq\infty,$ of a function $f\in L_{p}(G)$ by
\[
\omega_p(f,\delta):=\sup_{\lvert t\rvert<\delta}\Vert f(.+t)-f(.)\Vert_p,\quad \delta>0,
\]
with the notation
\[
\lvert x\rvert:=\sum_{i=0}^\infty \frac{x_i}{2^{i+1}} \quad \textrm{for all }x\in G.
\]

The Lipschitz classes in $L_p(G)$ (for each $\alpha>0$) are defined as
\[
\textrm{Lip}(\alpha,p,G):=\{ f\in L_{p}(G): \omega_p(f,\delta)=O(\delta^\alpha )
\textrm{ as } \delta\to 0\}.
\]

We introduce some concepts of Walsh-Fourier analysis.
The Rade\-macher functions are
defined as
\[
r_{k}(x):=(-1)^{x_{k}} \ (x\in G, k\in {\mathbb N}).
\]
The Walsh-Paley functions are the product functions of the Rademacher functions. Namely,
each natural number $n$ can be uniquely expressed in the number system based 2, in the form 
\[
n=\sum_{k=0}^\infty n_{k}2^{k},\ n_{k}\in \{ 0,1\} \ (k\in {\mathbb N}),
\] 
where only a finite number of $n_{k}$'s different from zero.
Let the order of $n\in\mathbb{P}$ be denoted by $\vert n\vert :=\max \{ j\in
{\mathbb N}: n_{j}\neq 0\}$. Walsh-Paley functions are $w_{0}:=1$ and for $n\in\mathbb{P}$
\[
w_{n}(x):=\prod_{k=0}^{\infty }r_{k}^{n_{k}}(x)
=(-1)^{\sum_{k=0}^{\vert n\vert }n_{k}x_{k}}.
\]

Let ${\mathcal P}_n$ be the collection of Walsh  polynomials of order less than $n$, that is, functions of the form
\[
P(x)=\sum_{k=0}^{n-1}a_kw_k(x),
\]
where $n\in\mathbb{P}$ and $\{a_k\}$ is a sequence of complex numbers.

It is known \cite{hew} that the system $(w_{n}, n\in
{\mathbb N})$ is the character system of $(G,+)$.
The $n$th Fourier-coefficient, the $n$th partial sum of the Fourier series and the $n$th Dirichlet kernel is defined by
\[
\hat f(n):=
\int_{G} fw_{n} d\mu,
\
S_{n}(f):=\sum_{k=0}^{n-1}\hat f(k)w_{k},
\ D_{n}:=
\sum_{k=0}^{n-1}w_{k},\ D_{0}:=0. 
\]
Fej\'er kernels are defined as the arithmetical means of Dirichlet kernels, that is, 
\[
K_{n}:=\frac{1}{n}\sum_{k=1}^{n}D_{k}.
\] 

Let $T:=\left( t_{i,j}\right)_{i,j=1}^{\infty}$ be a doubly infinite matrix of numbers. It is always supposed that matrix $T$ is lower triangular. 

Let us define the $(m,n)$th  matrix transform de La Vall\'ee Poussin mean  determined by the matrix $T$ as
\[
	\sigma_{m,n}^T(f):=\sum_{k=m}^{n}t_{k,n}S_k(f),
\]
where $m,n\in\mathbb{P}$ and $m\leq n$.

The $(m,n)$th matrix transform de La Vall\'ee Poussin kernel is defined as
\[
	K_{m,n}^{T}:=\sum_{k=m}^{n}t_{k,n}D_{k}.
\]
It is very easy to verify that
\[
	\sigma_{m,n}^T(f;x)=\int_{G}f(u)K_{m,n}^{T}(u+x)d\mu(u).
\]

We introduce the notation
$\Delta t_{k,n}:=t_{k,n}-t_{k+1,n},$ where $k\in\{1,\ldots,n\}$ and $t_{n+1,n}:=0$.	

\section{Historical overview}

Matrix transforms means are common generalizations of several well-known summation methods. It follows by simple consideration that the N\"orlund means, the  Fej\'er (or the $(C,1)$) and the $(C,\alpha)$ means are special cases of the matrix transform summation method introduced above.

Our paper is motivated by the work of M\'oricz, Siddiqi \cite{MS} on the Walsh-N\"orlund summation method and the result of M\'oricz and Rhoades \cite{MR} on the Walsh weighted mean method. As special cases, M\'oricz and Siddiqi obtained the earlier results given by
Yano \cite{Y2}, Jastrebova \cite{J} and Skvortsov \cite{Sk2} on the rate of the approximation by Ces\`aro means. The approximation properties of the Walsh-Ces\`aro means of negative order were studied by Goginava \cite{gog1}, the Vilenkin case was investigated by Shavardenidze \cite{Sh} and Tepnadze \cite{TT}. A common generalization of these two results of M\'oricz and Siddiqi \cite{MS} and M\'oricz
and Rhoades \cite{MR} was given by Nagy and the author \cite{BN2}.

In 2008, Fridli, Manchanda and Siddiqi  generalized the result of M\'oricz and Siddiqi for homogeneous Banach spaces and dyadic Hardy spaces \cite{FSM}. Recently, the author, Baramidze, Memi\'c, Nagy, Persson, Tephnadze and Wall presented  some results with respect to this topic \cite{T1},\cite{BN5}, \cite{T3},\cite{T2}. See \cite{TE, W1}, as well. For the two-dimensional results see \cite{BNT,N1,N2}.

It is important to note that in the paper of Chripk\'o \cite{C1} some methods and results with respect to Jacobi-Fourier series gave us some ideas and used in this paper.

\section{Auxiliary results}	

To prove Theorem \ref{thm-main} we need the following Lemmas. 
\begin{lemma}[Paley's Lemma \cite{SWSP}, p. 7.]\label{pal} For $n\in\mathbb{N}$
	\[
	D_{2^n}(x) = \begin{cases} 2^n, &\textrm{ if } x\in I_n, \\
		0, &\textrm{ if } x\notin I_n.
	\end{cases}
	\]
\end{lemma}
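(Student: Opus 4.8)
The plan is to compute $D_{2^n}$ directly from the definitions. Recall that $D_{2^n}(x)=\sum_{k=0}^{2^n-1}w_k(x)$, and that for $0\le k<2^n$ the dyadic expansion $k=\sum_{j=0}^{n-1}k_j2^j$ has all its digits among the first $n$ positions, so by the product formula for the Walsh-Paley functions $w_k(x)=\prod_{j=0}^{n-1}r_j^{k_j}(x)=\prod_{j=0}^{n-1}(-1)^{k_jx_j}$. Summing over all $k$ in the range amounts to summing over all digit vectors $(k_0,\dots,k_{n-1})\in\{0,1\}^n$, and by distributivity of the finite sum over the finite product,
\[
D_{2^n}(x)=\sum_{(k_0,\dots,k_{n-1})\in\{0,1\}^n}\,\prod_{j=0}^{n-1}(-1)^{k_jx_j}=\prod_{j=0}^{n-1}\Bigl(1+(-1)^{x_j}\Bigr).
\]
Each factor $1+(-1)^{x_j}$ equals $2$ if $x_j=0$ and $0$ if $x_j=1$; hence the product is $2^n$ when $x_0=\dots=x_{n-1}=0$, that is, when $x\in I_n$, and it is $0$ otherwise. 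This is precisely the asserted formula, and the case $n=0$ reduces to the trivial identity $D_1=w_0\equiv 1$ on $I_0=G$.

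An equivalent route is induction on $n$. Splitting $\{0,\dots,2^{n+1}-1\}$ into the lower half $\{0,\dots,2^n-1\}$ and the upper half $\{2^n,\dots,2^{n+1}-1\}$, and writing each $k$ in the upper half as $k=2^n+l$ with $0\le l<2^n$, the product definition of $w_k$ gives $w_{2^n+l}=r_n w_l$, so that
\[
D_{2^{n+1}}=\sum_{l=0}^{2^n-1}w_l+r_n\sum_{l=0}^{2^n-1}w_l=\bigl(1+r_n\bigr)D_{2^n}.
\]
Since $1+r_n(x)$ equals $2$ or $0$ according as $x_n=0$ or $x_n=1$, and $I_{n+1}=\{x\in I_n:x_n=0\}$, the inductive hypothesis for $D_{2^n}$ immediately yields the statement for $D_{2^{n+1}}$.

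There is no genuine obstacle here; the only step deserving a word of care is the passage from a sum over $\{0,1\}^n$ to a product of one-dimensional sums (equivalently, the identity $w_{2^n+l}=r_nw_l$ used in the inductive version), and both follow at once from the definition $w_k=\prod_{j\ge 0}r_j^{k_j}$. I would present the first, direct computation as the proof, since it also makes transparent why $I_n$ is exactly the support of $D_{2^n}$.
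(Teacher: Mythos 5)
Your argument is correct: the factorization of the sum over $\{0,\dots,2^n-1\}$ into the product $\prod_{j=0}^{n-1}\bigl(1+(-1)^{x_j}\bigr)$ is exactly the standard proof of Paley's Lemma, and the inductive variant via $D_{2^{n+1}}=(1+r_n)D_{2^n}$ is an equally valid rephrasing. The paper itself gives no proof (it cites \cite{SWSP}, p.~7), and what you wrote is essentially the argument found there, so there is nothing to add.
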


\begin{lemma}[\cite{SWSP}, p. 34.]\label{l3}
	For $j,n\in\mathbb{N},\ j<2^n$ we have
	\[
	D_{2^n+j}=D_{2^n}+r_{n}D_j.
	\]
\end{lemma}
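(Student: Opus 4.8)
The plan is to unwind the definition of the Dirichlet kernel and to exploit the multiplicativity of the Walsh--Paley functions together with the binary digit expansion of the index. First I would split off the block of indices from $2^n$ upwards:
\[
D_{2^n+j}=\sum_{k=0}^{2^n+j-1}w_k=\sum_{k=0}^{2^n-1}w_k+\sum_{l=0}^{j-1}w_{2^n+l}=D_{2^n}+\sum_{l=0}^{j-1}w_{2^n+l}.
\]
This is legitimate because $j<2^n$ guarantees $2^n+j-1<2^{n+1}$, and the reindexing $k=2^n+l$ is a bijection between $\{2^n,\dots,2^n+j-1\}$ and $\{0,\dots,j-1\}$.

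The key step is to identify $w_{2^n+l}$ for $0\le l<2^n$. Since $l<2^n$, the binary expansion of $l$ satisfies $l_n=l_{n+1}=\dots=0$, hence $2^n+l$ has its $n$th digit equal to $1$ and all of its other digits equal to those of $l$; in particular the order of $2^n+l$ is exactly $n$ (with $l=0$ giving $2^n$ itself, where $w_0:=1$). Consequently, using the product representation $w_m=\prod_k r_k^{m_k}$,
\[
w_{2^n+l}=\prod_{k=0}^{n}r_k^{(2^n+l)_k}=r_n\prod_{k=0}^{n-1}r_k^{l_k}=r_n\,w_l,\qquad 0\le l<2^n.
\]

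Substituting back and pulling the common factor $r_n$ out of the finite sum gives
\[
\sum_{l=0}^{j-1}w_{2^n+l}=r_n\sum_{l=0}^{j-1}w_l=r_nD_j,
\]
so that $D_{2^n+j}=D_{2^n}+r_nD_j$, as claimed. I do not anticipate a genuine obstacle here: the only point requiring care is the digit bookkeeping, namely the observation that adding $2^n$ to an index below $2^n$ merely toggles the previously-zero $n$th binary digit and leaves the lower digits untouched, so that the associated Walsh function acquires exactly one extra Rademacher factor $r_n$; everything else is a routine manipulation of finite sums.
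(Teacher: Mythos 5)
Your proof is correct and complete: the splitting of the sum, the reindexing, and the digit argument showing $w_{2^n+l}=r_nw_l$ for $0\le l<2^n$ are exactly the standard derivation, and the empty-sum convention even handles the edge case $j=0$ consistently with $D_0:=0$. The paper itself offers no proof of this lemma --- it simply cites it from the Schipp--Wade--Simon--P\'al monograph --- and your argument is essentially the one found there, so there is nothing to reconcile.
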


\begin{lemma}[Yano's Lemma \cite{Y1}]\label{Yano}  The norm of the Fej\'er kernel is bounded uniformly. That is, for all $n\in\mathbb{P}$
	\[
	\|K_{n}\|_{1}\leq 2.
	\]
\end{lemma}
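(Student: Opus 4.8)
The plan is to reduce the estimate to the dyadic indices and then exploit positivity. First I would record the elementary facts that will make the dyadic case trivial: since $w_0$ integrates to $1$ and every other Walsh character integrates to $0$, one has $\int_G D_k\, d\mu = 1$ for each $k\ge 1$, hence $\int_G K_n\, d\mu = 1$ for every $n\in\mathbb P$; therefore any Fej\'er kernel that happens to be nonnegative automatically has $L_1$-norm equal to $1$. Straight from Paley's Lemma \ref{pal} I would also note that $D_{2^m}=2^m\mathbf 1_{I_m}\ge 0$ and $\|D_{2^m}\|_1 = 2^m\mu(I_m)=1$.

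The first substantive step is to prove $K_{2^m}\ge 0$ for every $m\in\mathbb N$, which then gives $\|K_{2^m}\|_1=1$. Splitting $\sum_{k=1}^{2^{m+1}}D_k$ over the blocks $\{1,\dots,2^m\}$ and $\{2^m+1,\dots,2^{m+1}\}$ and using Lemma \ref{l3} on the terms $D_{2^m+j}$ with $1\le j\le 2^m-1$, together with Paley's Lemma for $D_{2^{m+1}}$, one arrives at the recursion
\[
2^{m+1}K_{2^{m+1}} = 2^m(1+r_m)K_{2^m} + (2^m-1-r_m)D_{2^m} + D_{2^{m+1}}.
\]
For $m\ge 1$ every summand on the right is nonnegative, because $1+r_m\ge 0$, $2^m-1-r_m\ge 2^m-2\ge 0$, $K_{2^m}\ge 0$ by the induction hypothesis, and $D_{2^m},D_{2^{m+1}}\ge 0$ by Paley; so induction yields $K_{2^{m+1}}\ge 0$, the base cases $K_1=w_0$ and $K_2=1+\frac12 r_0$ being nonnegative by inspection. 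I expect this to be the main obstacle: the sign bookkeeping in the recursion genuinely fails at $m=0$ (where $2^m-1-r_m$ can be negative), so the two smallest kernels must be checked by hand and the inductive step run only from $m\ge 1$ onward.

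Finally, for an arbitrary $n\in\mathbb P$ I would argue by strong induction on $n$. Write $n=2^{|n|}+\ell$ with $0\le \ell<2^{|n|}$; the case $\ell=0$ is exactly the previous step. When $\ell\ge 1$, splitting the sum defining $nK_n$ and applying Lemma \ref{l3} to the terms $D_{2^{|n|}+j}$ with $1\le j\le\ell<2^{|n|}$ gives
\[
nK_n = 2^{|n|}K_{2^{|n|}} + \ell\, D_{2^{|n|}} + r_{|n|}\,\ell\, K_\ell .
\]
Taking $L_1$-norms, invoking $\|K_{2^{|n|}}\|_1=\|D_{2^{|n|}}\|_1=1$ and the induction hypothesis $\|K_\ell\|_1\le 2$ (legitimate since $1\le\ell<2^{|n|}\le n$), we obtain $n\|K_n\|_1\le 2^{|n|}+3\ell$; the elementary inequality $\frac{2^{|n|}+3\ell}{2^{|n|}+\ell}\le 2$, which is equivalent to $\ell\le 2^{|n|}$ and hence holds here, closes the induction and delivers $\|K_n\|_1\le 2$.
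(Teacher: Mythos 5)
Your proof is correct. Note, however, that the paper itself gives no argument for this statement: Lemma \ref{Yano} is quoted from Yano's original work \cite{Y1} and used as a black box, so there is no ``paper proof'' to compare against. What you have written is essentially the classical proof, and it is pleasantly self-contained given the tools already on the page: it uses only Paley's Lemma \ref{pal} and the shift identity of Lemma \ref{l3}. All the steps check out. The recursion $2^{m+1}K_{2^{m+1}} = 2^m(1+r_m)K_{2^m} + (2^m-1-r_m)D_{2^m} + D_{2^{m+1}}$ follows correctly from splitting the sum and writing $\sum_{j=1}^{2^m-1}D_j = 2^mK_{2^m}-D_{2^m}$; the sign analysis is right, including your observation that the coefficient $2^m-1-r_m$ can be negative only at $m=0$, which you correctly sidestep by verifying $K_1$ and $K_2$ by hand. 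The final strong induction via $nK_n = 2^{\vert n\vert}K_{2^{\vert n\vert}} + \ell D_{2^{\vert n\vert}} + r_{\vert n\vert}\,\ell K_\ell$ and the inequality $(2^{\vert n\vert}+3\ell)/(2^{\vert n\vert}+\ell)\le 2$ is valid since $\ell<2^{\vert n\vert}\le n$ keeps the hypothesis applicable. (As an aside, sharper bounds are possible --- the paper's Lemma \ref{Tol} records Toledo's exact value $17/15$ --- but the constant $2$ is all that is claimed here.)
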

In 2018, Toledo improved this result.
\begin{lemma}\cite{Tol}\label{Tol}  
	\[
	\sup_{n\in\mathbb{P}}\|K_{n}\|_{1}= \frac{17}{15}.
	\]
\end{lemma}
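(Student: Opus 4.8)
The plan is to reduce $\sup_{n\in\mathbb{P}}\|K_n\|_1$ to a combinatorial problem about the binary expansion of $n$. Since $\int_G K_n\,d\mu=\frac1n\sum_{k=1}^n\int_G D_k\,d\mu=1$, we have
\[
\|K_n\|_1=1+2\int_G\max\{0,-K_n\}\,d\mu,
\]
so it is enough to show that the integral of the negative part of $K_n$ never exceeds $\tfrac1{15}$ while coming arbitrarily close to it. The basic tool is a recursion in $|n|$: writing $n=2^N+n'$ with $N=|n|$ and $0\le n'<2^N$, and summing the identity of Lemma \ref{l3} over $j=1,\dots,n'$, one gets
\[
nK_n=2^N K_{2^N}+n'D_{2^N}+n'\,r_N K_{n'},
\]
with the convention $K_0:=0$. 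The summands $2^NK_{2^N}$ and $n'D_{2^N}$ are nonnegative and depend only on $x_0,\dots,x_{N-1}$, and $D_{2^N}$ is supported on $I_N$ by Lemma \ref{pal}; only the last summand carries the Rademacher factor $r_N$.

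The second ingredient is an explicit description of the dyadic-index kernels. Taking $n'=2^N$ in the displayed recursion yields $K_{2^{N+1}}=\tfrac12\big((1+r_N)K_{2^N}+D_{2^N}\big)$, and an induction on $N$ then shows that $K_{2^N}\ge 0$ — hence $\|K_{2^N}\|_1=1$ — and, more sharply, that $K_{2^N}$ equals $\tfrac{2^N+1}{2}$ on $I_N$, equals $2^{l-1}$ on the cylinder $\{x: x_0=\dots=x_{l-1}=0,\ x_l=1,\ x_{l+1}=\dots=x_{N-1}=0\}$ for each $l=0,\dots,N-1$, and vanishes elsewhere. Feeding this back into the recursion and integrating over $G$ — first over $x_N$, on which the integrand does not depend, then separately on $I_N$ and on its complement, and using $|a+b|+|a-b|=2\max\{|a|,|b|\}$ — one reaches the exact identity
\[
\|K_n\|_1=\frac{n+n'+1}{2n}+\frac1n\int_{G\setminus I_N}\max\{2^N K_{2^N},\,n'|K_{n'}|\}\,d\mu.
\]
On $G\setminus I_N$ the function $2^N K_{2^N}$ takes only $N$ nonzero values, so the last integral simplifies to $n'\|K_{n'}\|_1-\tfrac{n'(n'+1)}{2}2^{-N}+2^{-N}\sum_{l=0}^{N-1}(2^{N+l-1}-n'|c_l|)^+$, where $c_l$ is the constant value of $K_{n'}$ on the $l$-th of the cylinders above.

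This realizes $\|K_n\|_1$ through a recursion in the number of binary digits of $n$: each further digit block of $n'$ contributes positive correction terms of the form $(2^{N+l-1}-n'|c_l|)^+$, nonzero precisely where $K_{n'}$ is small at one of the distinguished dyadic points, and these (together with any excess of $\|K_{n'}\|_1$ over $1$ inherited from earlier blocks) are what push the norm above $1$. I would then pin down the digit pattern of $n$ that maximizes $\|K_n\|_1$ — it turns out to be a fixed periodic pattern, so that the corrections coming from successively smaller scales form a geometric-type series and, along this extremal sequence, $\|K_n\|_1$ increases to $\tfrac{17}{15}$ — and show that every competing pattern does strictly worse, via an induction that propagates the bound $\int_G\max\{0,-K_m\}\,d\mu\le\tfrac1{15}$ through the recursion above. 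The main obstacle is precisely this inductive inequality: the crude triangle inequality recovers only Yano's estimate $\|K_n\|_1\le 2$ of Lemma \ref{Yano}, so one must keep exact account of the signs and magnitudes of the Dirichlet and Fej\'er kernels on every relevant dyadic block in order to prove that the extremal pattern dominates all competitors. The constant $\tfrac{17}{15}$ is thus a supremum that is approached but — as already the values $\|K_5\|_1=\tfrac{21}{20}$ and $\|K_6\|_1=\tfrac{25}{24}$ suggest — not attained.
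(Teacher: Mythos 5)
The first thing to say is that the paper does not prove this statement at all: Lemma \ref{Tol} is imported verbatim from Toledo's paper \cite{Tol}, so there is no in-paper argument to compare yours against, and within this paper the ``proof'' is the citation. Judged on its own merits, your preparatory work is correct. The normalization $\Vert K_n\Vert_1=1+2\int_G\max\{0,-K_n\}\,d\mu$, the recursion $nK_n=2^NK_{2^N}+n'D_{2^N}+n'r_NK_{n'}$ obtained by summing Lemma \ref{l3}, the explicit nonnegative form of $K_{2^N}$ (hence $\Vert K_{2^N}\Vert_1=1$), and the exact identity for $\Vert K_n\Vert_1$ via $|a+b|+|a-b|=2\max\{|a|,|b|\}$ all check out; your sample values $\Vert K_5\Vert_1=\tfrac{21}{20}$ and $\Vert K_6\Vert_1=\tfrac{25}{24}$ are also correct, and the identity reproduces, e.g., $\Vert K_{21}\Vert_1=\tfrac{123}{112}$. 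This is genuinely the right machinery and matches how Toledo's analysis begins.

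The gap is the one you name yourself, and it is not a small one: everything up to and including the displayed identity is elementary bookkeeping, while the entire content of the lemma --- that the accumulated corrections never push $\Vert K_n\Vert_1$ above $\tfrac{17}{15}$, and that some sequence of indices drives it arbitrarily close to $\tfrac{17}{15}$ --- is left as a plan (``I would then pin down the digit pattern\dots''). The inductive invariant $\int_G\max\{0,-K_m\}\,d\mu\le\tfrac1{15}$ does not propagate through your recursion by any soft argument: the correction term $2^{-N}\sum_{l}(2^{N+l-1}-n'|c_l|)^+$ depends on the pointwise values $c_l$ of $K_{n'}$ on $N$ distinguished cosets, not merely on $\Vert K_{n'}\Vert_1$, so the induction hypothesis would have to carry simultaneous quantitative lower bounds for $|K_{n'}|$ at all of these points, uniformly over all admissible digit patterns of $n'$; this case analysis is precisely the hard core of Toledo's paper and is absent here. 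Likewise the identification of the extremal pattern, the evaluation of the resulting series as exactly $\tfrac{17}{15}$, and the non-attainment claim are asserted rather than proved. As written, your argument establishes a correct recursive expression for $\Vert K_n\Vert_1$ but not the value of the supremum, so the lemma is not yet proved.
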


\begin{lemma}\cite{MS}\label{MS-mod} Let $n\in\mathbb{P},\ g\in {\mathcal P}_{2^{n}}$, $f\in L_p(G)$ ($1\leq p\leq \infty$). Then 
	\[
	\left\Vert\int_{G} r_{n}(t) g(t) (f(\cdot+t)-f(\cdot))d\mu(t) \right\Vert_p\leq \frac{1}{2}\Vert g\Vert_1\omega_p\left(f,2^{-n}\right)
	\]
	holds.
\end{lemma}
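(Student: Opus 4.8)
The plan is to exploit that $g\in\mathcal P_{2^n}$ depends only on the first $n$ coordinates, hence is constant on every dyadic interval of rank $n$, while $r_n(t)=(-1)^{t_n}$ separates the two rank-$(n+1)$ halves of such an interval. First I would decompose $G$ into the cosets $I_n(u)$ (with representatives $u$ satisfying $u_n=u_{n+1}=\dots=0$) and write
\[
\int_G r_n(t)g(t)\bigl(f(x+t)-f(x)\bigr)d\mu(t)=\sum_u g(u)\int_{I_n(u)}r_n(t)\bigl(f(x+t)-f(x)\bigr)d\mu(t).
\]
On $I_n(u)=I_{n+1}(u)\cup I_{n+1}(u+e_n)$, where $e_n\in G$ has its only nonzero coordinate at position $n$, one has $r_n\equiv+1$ on $I_{n+1}(u)$ and $r_n\equiv-1$ on $I_{n+1}(u+e_n)$; since these halves have equal measure, the contributions of the constant term $-f(x)$ cancel, leaving $\int_{I_{n+1}(u)}f(x+t)\,d\mu(t)-\int_{I_{n+1}(u+e_n)}f(x+t)\,d\mu(t)$.

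Next I would apply the measure-preserving substitution $t\mapsto t+e_n$, which carries $I_{n+1}(u)$ onto $I_{n+1}(u+e_n)$, so the second integral becomes $\int_{I_{n+1}(u)}f(x+t+e_n)\,d\mu(t)$. Summing over $u$ and using that $g$ is constant on each $I_{n+1}(u)\subset I_n(u)$, the whole expression collapses to
\[
\int_A g(t)\bigl(f(x+t)-f(x+t+e_n)\bigr)d\mu(t),\qquad A:=\{t\in G:\ t_n=0\},\quad \mu(A)=\tfrac12.
\]
Then I would invoke the generalized Minkowski inequality to bring the $L_p$ norm under the integral sign, and use translation invariance of $\|\cdot\|_p$ together with $|e_n|=2^{-n-1}<2^{-n}$ to bound $\|f(\cdot+t)-f(\cdot+t+e_n)\|_p\le\omega_p(f,2^{-n})$ uniformly in $t$, obtaining the bound $\omega_p(f,2^{-n})\int_A|g(t)|\,d\mu(t)$.

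The decisive final step — the source of the factor $\tfrac12$ — is the identity $\int_A|g|\,d\mu=\tfrac12\|g\|_1$: since $|g|$ is constant on each $I_n(u)$ and $I_n(u)$ splits into the equal-measure pieces $I_{n+1}(u)\subset A$ and $I_{n+1}(u+e_n)\subset G\setminus A$, the integral of $|g|$ over $A$ equals that over $G\setminus A$, each being half of $\|g\|_1$. I expect the only points requiring genuine care to be (a) the rigorous justification of the cancellation of the $f(x)$ terms and of the change of variables, and (b) the reading of the generalized Minkowski inequality and of the $t$-integration in the supremum sense when $p=\infty$; neither poses a real obstacle.
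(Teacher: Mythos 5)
Your argument is correct and complete: the vanishing of the $f(x)$ term (since $r_ng$ has spectrum in $[2^n,2^{n+1})$ and hence zero mean), the reduction to $\int_A g(t)(f(x+t)-f(x+t+e_n))\,d\mu(t)$ via the substitution $t\mapsto t+e_n$, the bound $\|f(\cdot+t)-f(\cdot+t+e_n)\|_p\le\omega_p(f,2^{-n})$ from $|e_n|=2^{-n-1}<2^{-n}$, and the identity $\int_A|g|\,d\mu=\tfrac12\|g\|_1$ all check out. The paper itself gives no proof of this lemma --- it only cites M\'oricz and Siddiqi --- and your reconstruction follows essentially the standard argument from that source, so there is nothing to flag.
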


In the next lemma, we give a decomposition of the kernels $K_{2^{n},2^{n+1}-1}^T$.
\begin{lemma}\label{decomp} Let $n$ be a positive integer, then we have 
	\begin{align*}
		K_{2^{n},2^{n+1}-1}^{T}
		&=\sum_{k=0}^{2^{n}-1}t_{2^{n} +k,2^{n+1}-1}D_{2^{n}}+r_{n}\sum_{k=1}^{2^{n}-2}\Delta t_{2^{n} +k,2^{n+1}-1}kK_k
		\\
		&\quad 
		+r_{n}t_{2^{n+1}-1,2^{n+1}-1}(2^{n}-1)K_{2^{n}-1}\\
		&=:\sum_{j=1}^{3} K_{j,n}.
	\end{align*}
\end{lemma}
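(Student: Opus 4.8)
The plan is to expand the definition of $K_{2^n,2^{n+1}-1}^T$, apply Lemma~\ref{l3} to each Dirichlet kernel appearing in it, and then use Abel summation to turn the resulting sum $\sum_k t_{2^n+k,2^{n+1}-1}D_k$ into a sum involving the differences $\Delta t_{2^n+k,2^{n+1}-1}$ and the Fej\'er kernels $K_k$. Concretely, I would start from
\[
K_{2^n,2^{n+1}-1}^{T}=\sum_{k=0}^{2^n-1}t_{2^n+k,2^{n+1}-1}D_{2^n+k}
\]
(just a reindexing of the defining sum $\sum_{j=2^n}^{2^{n+1}-1}t_{j,2^{n+1}-1}D_j$), and then substitute $D_{2^n+k}=D_{2^n}+r_nD_k$ from Lemma~\ref{l3}, valid since $k<2^n$. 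This splits the kernel into the ``constant'' part $\bigl(\sum_{k=0}^{2^n-1}t_{2^n+k,2^{n+1}-1}\bigr)D_{2^n}$, which is exactly $K_{1,n}$, plus $r_n\sum_{k=0}^{2^n-1}t_{2^n+k,2^{n+1}-1}D_k$.

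The second step is the Abel (summation by parts) manipulation of $\sum_{k=0}^{2^n-1}t_{2^n+k,2^{n+1}-1}D_k$. Recalling that $K_m=\frac1m\sum_{i=1}^m D_i$, so that $\sum_{i=1}^m D_i=mK_m$, and that $D_0=0$, I would write $\sum_{k=1}^{2^n-1}t_{2^n+k,2^{n+1}-1}D_k$ and apply Abel's transformation with the partial sums $mK_m$ of the $D_k$'s. This produces a telescoped boundary term at $k=2^n-1$, namely $t_{2^{n+1}-1,2^{n+1}-1}(2^n-1)K_{2^n-1}$ (using that $t_{2^n+(2^n-1),2^{n+1}-1}=t_{2^{n+1}-1,2^{n+1}-1}$), together with the sum $\sum_{k=1}^{2^n-2}\Delta t_{2^n+k,2^{n+1}-1}\,kK_k$, where $\Delta t_{2^n+k,2^{n+1}-1}=t_{2^n+k,2^{n+1}-1}-t_{2^n+k+1,2^{n+1}-1}$ matches the notation fixed before Section~3. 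Multiplying by $r_n$ gives $K_{2,n}+K_{3,n}$, and collecting the three pieces yields the claimed identity.

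The main thing to be careful about — rather than a genuine obstacle — is bookkeeping at the summation endpoints: making sure the $k=0$ term drops out (because $D_0=0$), that the index shift between $t_{j,2^{n+1}-1}$ with $j=2^n,\dots,2^{n+1}-1$ and $t_{2^n+k,2^{n+1}-1}$ with $k=0,\dots,2^n-1$ is applied consistently, and that the Abel transformation is set up so the boundary term lands precisely at $K_{2^n-1}$ with coefficient $t_{2^{n+1}-1,2^{n+1}-1}(2^n-1)$ and the running sum stops at $k=2^n-2$. Once the endpoints are tracked correctly, the identity is a direct computation with no analytic input beyond Lemma~\ref{l3}.
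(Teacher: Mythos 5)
Your proposal is correct and follows essentially the same route as the paper: reindex the defining sum, apply Lemma~\ref{l3} to split off the $D_{2^n}$ part, and Abel-transform $\sum_k t_{2^n+k,2^{n+1}-1}D_k$ using $\sum_{i=1}^{m}D_i=mK_m$ to produce the $\Delta t$ sum plus the boundary term $t_{2^{n+1}-1,2^{n+1}-1}(2^n-1)K_{2^n-1}$. The endpoint bookkeeping you flag (the $k=0$ term vanishing via $D_0=0$, the inner sum stopping at $2^n-2$) is handled exactly as in the paper's proof.
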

\begin{proof}
	We write 
	\begin{align*}
		K_{2^{n},2^{n+1}-1}^{T}=\sum_{l=2^{n}}^{2^{n+1}-1}t_{l,2^{n+1}-1}D_{l}.
	\end{align*}
	
	Now, we apply Lemma \ref{l3}. We get 
	\begin{align*}
		\sum_{l=2^{n}}^{2^{n+1}-1}t_{l,2^{n+1}-1}D_{l}&=\sum_{k=0}^{2^{n}-1}t_{2^{n} +k,2^{n+1}-1}D_{2^{n}+k}\\
		&=\sum_{k=0}^{2^{n}-1}t_{2^{n} +k,2^{n+1}-1}D_{2^{n}}
		+r_{n}\sum_{k=1}^{2^{n}-1}t_{2^{n} +k,2^{n+1}-1}D_{k}.
	\end{align*}
	Using Abel-transform 
	\begin{align*}\sum_{k=1}^{2^{n}-1}t_{2^{n} +k,2^{n+1}-1}D_{k}&=\sum_{k=1}^{2^{n}-2}\Delta  t_{2^{n} +k,2^{n+1}-1}kK_k\\
		&\quad+\ t_{2^{n+1}-1,2^{n+1}-1}(2^{n}-1)K_{2^{n}-1}.
	\end{align*}		
	
	Summarizing these it completes the proof of Lemma \ref{decomp}.
\end{proof}

\section{The rate of the approximation}

\begin{theorem}\label{thm-main} 
	Let $f\in L_p(G)\ (1 \leq p \leq \infty)$. For every $n\in\mathbb{P},\ \{t_{k,2^{n+1}-1}: 2^{n}\leq k\leq 2^{n+1}-1\}$ be a finite sequence of non-negative numbers such that 
	\begin{equation}\label{c1}
		\sum_{k=2^{n}}^{2^{n+1}-1}t_{k,2^{n+1}-1}=1
	\end{equation}
	be satisfied.
	
	a) If the finite sequence  $ \{ t_{k,2^{n+1}-1}: 2^{n}\leq k \leq 2^{n+1}-1\}$ is non-decreasing for a fixed $n$	and 
	\begin{equation}\label{c2}
		t_{2^{n+1}-1,2^{n+1}-1}=O\left(\frac{1}{2^{n+1}-1}\right),
	\end{equation} or 
	
	b) if the finite sequence $ \{ t_{k,2^{n+1}-1}: 2^{n}\leq k \leq 2^{n+1}-1\}$ is  non-increasing for a  fixed $n$, then
	\[
		\left\Vert\sigma_{2^{n},2^{n+1}-1}^T(f)-f\right\Vert_{p}\leq
		c\omega_{p}\left(f,2^{-n}\right)
	\]
	holds.
\end{theorem}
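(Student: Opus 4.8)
The plan is to reduce the norm of $\sigma_{2^{n},2^{n+1}-1}^T(f)-f$ to a convolution against the kernel $K_{2^{n},2^{n+1}-1}^{T}$ and then exploit the decomposition from Lemma \ref{decomp}. First I would observe that the normalization condition \eqref{c1} gives $\sum_{k=2^n}^{2^{n+1}-1}t_{k,2^{n+1}-1}S_k(f)-f = \sum_{k=2^n}^{2^{n+1}-1}t_{k,2^{n+1}-1}(S_k(f)-f)$, so that in integral form
\[
\sigma_{2^{n},2^{n+1}-1}^T(f;x)-f(x)=\int_{G}(f(x+t)-f(x))K_{2^{n},2^{n+1}-1}^{T}(t)\,d\mu(t),
\]
using that $\int_G K_{2^{n},2^{n+1}-1}^{T}=1$. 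Then I would split this according to $K_{2^{n},2^{n+1}-1}^{T}=K_{1,n}+K_{2,n}+K_{3,n}$ from Lemma \ref{decomp} and estimate the three resulting integrals separately in $L_p$.

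For the first term $K_{1,n}=\big(\sum_{k=0}^{2^n-1}t_{2^n+k,2^{n+1}-1}\big)D_{2^n}$, note that the coefficient in front is at most $1$ by \eqref{c1} and nonnegativity, and $D_{2^n}=2^n\mathbf{1}_{I_n}$ by Paley's Lemma (Lemma \ref{pal}), so this piece is $2^n$ times an average over $I_n$; the corresponding integral is bounded by $\|\int_{I_n}(f(\cdot+t)-f(\cdot))2^n\,d\mu(t)\|_p$, which by Minkowski's inequality in integral form is $\le \sup_{|t|<2^{-n}}\|f(\cdot+t)-f(\cdot)\|_p=\omega_p(f,2^{-n})$, since $t\in I_n$ forces $|t|<2^{-n}$. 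For the terms $K_{2,n}$ and $K_{3,n}$, which both carry the Rademacher factor $r_n$, I would apply Lemma \ref{MS-mod}: each is $r_n$ times a Walsh polynomial in $\mathcal{P}_{2^n}$ (the Fej\'er kernels $K_k$ with $k\le 2^n-1$ lie in $\mathcal{P}_{2^n}$), so the integral $\|\int_G r_n(t)g(t)(f(\cdot+t)-f(\cdot))\,d\mu(t)\|_p$ is controlled by $\tfrac12\|g\|_1\,\omega_p(f,2^{-n})$ where $g=K_{2,n}/r_n$ or $g=K_{3,n}/r_n$. Thus everything reduces to showing $\|g\|_1=O(1)$, i.e. that
\[
\sum_{k=1}^{2^n-2}\big|\Delta t_{2^n+k,2^{n+1}-1}\big|\,k\,\|K_k\|_1
\quad\text{and}\quad
t_{2^{n+1}-1,2^{n+1}-1}\,(2^n-1)\,\|K_{2^n-1}\|_1
\]
are uniformly bounded in $n$, for which Yano's Lemma (Lemma \ref{Yano}) or Lemma \ref{Tol} gives $\|K_k\|_1\le 2$.

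The bound on the $K_{3,n}$ contribution is then immediate in case a): $t_{2^{n+1}-1,2^{n+1}-1}(2^n-1)\le 2^n\cdot O(2^{-n})=O(1)$ by \eqref{c2}; in case b), monotonicity (non-increasing) together with \eqref{c1} forces $t_{2^{n+1}-1,2^{n+1}-1}\le \tfrac{1}{2^n}$ (it is the smallest of $2^n$ nonnegative terms summing to $1$), so again $(2^n-1)t_{2^{n+1}-1,2^{n+1}-1}=O(1)$. The main obstacle is the $K_{2,n}$ term $\sum_{k=1}^{2^n-2}|\Delta t_{2^n+k,2^{n+1}-1}|\,k\,\|K_k\|_1$: here I would use $\|K_k\|_1\le 2$ to bound it by $2\sum_{k=1}^{2^n-2}|\Delta t_{2^n+k,2^{n+1}-1}|\,k$, and then exploit monotonicity so that the $\Delta$'s have a fixed sign. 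In case b) (non-increasing), $\Delta t_{2^n+k,2^{n+1}-1}\ge 0$, and an Abel summation run backwards turns $\sum_k \Delta t_{2^n+k,2^{n+1}-1}\,k$ into a telescoping combination of the $t$'s themselves plus $\sum_k t_{2^n+k,2^{n+1}-1}$, which is $1$ by \eqref{c1}; a parallel computation handles case a), where $\Delta t_{2^n+k,2^{n+1}-1}\le 0$ and the boundary term is exactly the one controlled by \eqref{c2}. Assembling the three $O(\omega_p(f,2^{-n}))$ bounds and absorbing all absolute constants into a single $c$ completes the proof.
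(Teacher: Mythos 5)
Your proposal is correct and follows essentially the same route as the paper: the same kernel decomposition from Lemma \ref{decomp}, Paley's Lemma plus the generalized Minkowski inequality for the $D_{2^n}$ piece, Lemma \ref{MS-mod} with the Fej\'er-kernel norm bound for the two Rademacher-weighted pieces, and Abel summation with the sign of $\Delta t$ fixed by monotonicity. The only (harmless) divergence is in case b), where you bound the $K_{3,n}$ contribution separately via $t_{2^{n+1}-1,2^{n+1}-1}\le 2^{-n}$, whereas the paper estimates $I_{2,n}+I_{3,n}$ jointly so that the boundary terms cancel and yield the explicit constant $\frac{17}{30}$ (hence the $\frac{47}{30}$ in the final remark); both arguments are valid.
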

\begin{proof}[Proof of Theorem \ref{thm-main}] 
	The proof is carried out in cases where  $1\leq p<\infty$, while the proof of case  $ p=\infty$ is similar. Recall that by the case $p=\infty$ we mean that we are considering the space of continuous functions.
	
	During our proofs $c$ denotes a positive constant, which may vary at different appearances. 
	
	We use  condition \eqref{c1}, the usual Minkowski inequality and Lemma \ref{decomp}
	\begin{align*}
		\left\Vert\sigma_{2^{n},2^{n+1}-1}^T(f)-f\right\Vert_{p}&=\left(\int_{G}|\sigma_{2^{n},2^{n+1}-1}^T(f;x)-f(x)|^{p}
		d\mu(x)\right)^{\frac{1}{p}}\\
		&=\left(\int_{G}\left|\int_{G}K_{2^{n},2^{n+1}-1}^{T}(u)F(x,u)d\mu(u)\right|^{p}
		d\mu(x)\right)^{\frac{1}{p}}\\
		&\leq\sum_{j=1}^{3}\left(\int_{G}\left|\int_{G}K_{j,n}(u)F(x,u)d\mu(u)\right|^{p}
		d\mu(x)\right)^{\frac{1}{p}}	\\	
		&=:\sum_{j=1}^{3}I_{j,n}
	\end{align*}
	with notation $F(x,u):=f(x+u)-f(x)$.
	
	Using generalized Minkowski inequality,  Lemma \ref{pal} and  condition \eqref{c1} for the expressions $I_{1,n}$, 
	we obtain
	\begin{align*}
		I_{1,n}&\leq 
		\sum_{k=0}^{2^{n}-1} t_{2^{n}+k,2^{n+1}-1} 		 
		\int_{G}D_{2^{n}}(u)\left(\int_{G}\left|F(x,u)\right|^{p}d\mu(x)\right)^{\frac{1}{p}}d\mu(u)\nonumber\\
		&\leq \sum_{k=2^{n}}^{2^{n+1}-1} t_{k,2^{n+1}-1} 
		\omega_{p}\left(f,2^{-n}\right)\\
		&\leq
		\omega_{p}\left(f,2^{-n}\right).
	\end{align*}
	
	Now, applying Lemma \ref{Tol} and Lemma \ref{MS-mod} we get 
	\begin{align*}
		I_{2,n}
		&\leq\sum_{k=1}^{2^{n}-2}\left|\Delta t_{2^{n}+k,2^{n+1}-1}\right|k\\
		&\quad\times\left(\int_{G}\left|\int_{G}r_{n}(u)K_{k}(u)F(x,u)d\mu(u)\right|^{p}d\mu(x)\right)^{\frac{1}{p}}\\
		&\leq
		\sum_{k=1}^{2^{n}-2}\left|\Delta t_{2^{n}+k,2^{n+1}-1}\right|k \frac{1}{2}\|K_{k}\|_{1}\omega_{p}\left(f,2^{-n}\right)\\
		&\leq
		\sum_{k=1}^{2^{n}-2}\left|\Delta t_{2^{n}+k,2^{n+1}-1}\right|k \omega_{p}\left(f,2^{-n}\right).
	\end{align*}
	
	We write in case a) 
	\begin{align*}
		\sum_{k=1}^{2^{n}-2} |\Delta t_{2^{n}+k,2^{n+1}-1}|k
		&=\sum_{k=1}^{2^{n}-2}
		(t_{2^{n}+k+1,2^{n+1}-1}-t_{2^{n}+k,2^{n+1}-1} )k\nonumber\\
		&= (2^{n}-2)t_{2^{n+1}-1,2^{n+1}-1}-\sum_{k=1}^{2^{n}-2}t_{2^{n}+k,2^{n+1}-1}\\
		&\leq  
		(2^{n+1}-1) t_{2^{n+1}-1,2^{n+1}-1}\nonumber
	\end{align*}
	and using condition \eqref{c2}
	\begin{align*}
		I_{2,n}&\leq(2^{n+1}-1) t_{2^{n+1}-1,2^{n+1}-1}\omega_{p}\left(f,2^{-n}\right)\\
		&\leq c\omega_{p}\left(f,2^{-n}\right).
	\end{align*}
	
	We estimate the expression $I_{3,n}$ in case a). Lemma \ref{Tol}, Lemma \ref{MS-mod} and condition \eqref{c2} yield 
	\begin{align*}
		I_{3,n}&\leq
		(2^{n}-1)t_{2^{n+1}-1,2^{n+1}-1}\\ &\quad \times\left(\int_{G}\left|\int_{G}r_{n}(u)K_{2^{n}-1}(u)F(x,u)d\mu(u)\right|^{p}d\mu(x)\right)^{\frac{1}{p}}\\
		&\leq
		(2^{n+1}-1)t_{2^{n+1}-1,2^{n+1}-1} \frac{1}{2}
		\|K_{2^{n}-1}\|_{1}\omega_{p}\left(f,2^{-n}\right)\nonumber\\
		&\leq
		(2^{n+1}-1)t_{2^{n+1}-1,2^{n+1}-1} 
		\omega_{p}\left(f,2^{-n}\right)\\ &\leq
		c\omega_{p}\left(f,2^{-n}\right).
	\end{align*}

	In case b) we estimate $I_{2,n}+I_{3,n}$. In this situation
	\begin{align*}
		\sum_{k=1}^{2^{n}-2}\left|\Delta t_{2^{n}+k,2^{n+1}-1}\right|k
		&= \sum_{k=1}^{2^{n}-2}t_{2^{n}+k,2^{n+1}-1}-(2^{n}-2)t_{2^{n+1}-1,2^{n+1}-1},
	\end{align*}
	so Lemma \ref{Tol}, Lemma \ref{MS-mod} and condition \eqref{c1} imply
	\begin{align*}
	I_{2,n}+I_{3,n}&\leq
	\sum_{k=1}^{2^{n}-2}\left|\Delta t_{2^{n}+k,2^{n+1}-1}\right|k\times\\ &\quad\quad \times\left(\int_{G}\left|\int_{G}r_{n}(u)K_{k}(u)F(x,u)d\mu(u)\right|^{p}d\mu(x)\right)^{\frac{1}{p}}\\
	&\quad+
	(2^{n}-1)t_{2^{n+1}-1,2^{n+1}-1}\times\\ &\quad\quad \times\left(\int_{G}\left|\int_{G}r_{n}(u)K_{2^{n}-1}(u)F(x,u)d\mu(u)\right|^{p}d\mu(x)\right)^{\frac{1}{p}}\\
	&\leq
	\left(\sum_{k=1}^{2^{n}-2} t_{2^{n}+k,2^{n+1}-1}+(2^{n}-1)t_{2^{n+1}-1,2^{n+1}-1}-(2^{n}-2)t_{2^{n+1}-1,2^{n+1}-1}\right)\\
	&\quad\times\frac{1}{2}\cdot\frac{17}{15}\omega_{p}\left(f,2^{-n}\right)\\
	&=
	\sum_{k=1}^{2^{n}-1}t_{2^{n}+k,2^{n+1}-1} \frac{17}{30}\omega_{p}\left(f,2^{-n}\right)
	\\ &\leq
	\frac{17}{30}\omega_{p}\left(f,2^{-n}\right).
\end{align*}	
	
	This completes the proof of our Theorem \ref{thm-main}.
\end{proof}
\begin{remark}
	We mention, that assuming \eqref{c1} is natural, because many well-known means satisfy it and this equality is a part of regularity conditions \cite[page 74.]{Zyg}.
\end{remark}
\begin{corollary}\label{s}
	Let us suppose that the conditions in Theorem \ref{thm-main} are satisfied. If $f\in \textrm{Lip}(\alpha,p,G)$, then   
	\[
	\Vert\sigma_{2^{n},2^{n+1}-1}(f)-f\Vert_{p}=O\left(2^{-n\alpha}\right).
	\] 
\end{corollary}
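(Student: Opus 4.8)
The plan is to deduce the corollary directly from Theorem~\ref{thm-main} by substituting the Lipschitz hypothesis into the right-hand side of the basic estimate. First I would observe that, since the corollary assumes the hypotheses of Theorem~\ref{thm-main} (i.e.\ case a) or case b) together with \eqref{c1}), the inequality
\[
\left\Vert\sigma_{2^{n},2^{n+1}-1}^T(f)-f\right\Vert_{p}\leq c\,\omega_{p}\left(f,2^{-n}\right)
\]
holds for every $n\in\mathbb{P}$ with a constant $c$ that does not depend on $n$.

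Next I would invoke the definition of the Lipschitz class: from $f\in\mathrm{Lip}(\alpha,p,G)$ we have $\omega_p(f,\delta)=O(\delta^\alpha)$ as $\delta\to0$, so there are $C>0$ and $\delta_0>0$ with $\omega_p(f,\delta)\leq C\delta^\alpha$ whenever $0<\delta<\delta_0$. Picking $N$ so that $2^{-N}<\delta_0$, the choice $\delta=2^{-n}$ gives $\omega_p(f,2^{-n})\leq C\,2^{-n\alpha}$ for all $n\geq N$, and combining this with the displayed estimate yields $\left\Vert\sigma_{2^{n},2^{n+1}-1}^T(f)-f\right\Vert_{p}\leq cC\,2^{-n\alpha}$ for $n\geq N$. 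For the finitely many remaining indices $n<N$ the quantity $\left\Vert\sigma_{2^{n},2^{n+1}-1}^T(f)-f\right\Vert_{p}\,2^{n\alpha}$ is bounded by a fixed number, so it can be absorbed into the implied constant, and therefore $\left\Vert\sigma_{2^{n},2^{n+1}-1}^T(f)-f\right\Vert_{p}=O(2^{-n\alpha})$, as claimed.

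There is essentially no genuine obstacle: the corollary is an immediate specialization of Theorem~\ref{thm-main}. The only minor point worth stating carefully is that the $O$-condition defining $\mathrm{Lip}(\alpha,p,G)$ is asymptotic in $\delta$, so one must either restrict to sufficiently large $n$ or fold the finitely many small indices into the constant, exactly as described above.
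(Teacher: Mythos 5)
Your proof is correct and follows exactly the route the paper intends: the paper states the corollary without proof precisely because it is the immediate substitution of $\omega_p(f,2^{-n})=O(2^{-n\alpha})$ into the estimate of Theorem~\ref{thm-main}. Your extra care about the asymptotic nature of the $O$-condition (handling the finitely many small $n$) is a reasonable, if minor, refinement.
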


\begin{remark}
In case b) we can formulate the statement of Theorem \ref{thm-main} in following form
\[
	\left\Vert\sigma_{2^{n},2^{n+1}-1}^T(f)-f\right\Vert_{p}\leq
	\frac{47}{30}\omega_{p}\left(f,2^{-n}\right).
\]
\end{remark}

\bibliography{B_dLVP2n_2023.06.13_arxiv}

\begin{thebibliography}{10}

\bibitem{T1}
L.~{Baramidze}, L.-E. {Persson}, G.~{Tephnadze}, and P.~{Wall}.
\newblock {Sharp ${H}_{p}-{L}_{p}$ type inequalities of weighted maximal
  operators of {V}ilenkin-{N}\"orlund means and its applications}.
\newblock {\em {J. Inequal. Appl.}}, (242), 2016.

\bibitem{BN2}
I.~{Blahota} and K.~{Nagy}.
\newblock {Approximation by $\Theta$-means of {W}alsh-{F}ourier series}.
\newblock {\em {Anal. Math.}}, 44:57--71, 2018.

\bibitem{BN5}
I.~{Blahota} and K.~{Nagy}.
\newblock {Approximation by matrix transform of {V}ilenkin-{F}ourier series}.
\newblock {\em {Publ. Math.}}, 99(1-2):223--242, 2021.

\bibitem{BNT}
I.~{Blahota}, K.~{Nagy}, and G.~{Tephnadze}.
\newblock {Approximation by {M}arcinkiewicz $\Theta$-means of double
  {W}alsh-{F}ourier series}.
\newblock {\em {Math. Inequal. Appl.}}, 22(3):837--853, 2019.

\bibitem{T3}
I.~{Blahota} and G.~{Tephnadze}.
\newblock {A note on maximal operators of {V}ilenkin-{N}\"orlund means}.
\newblock {\em {Acta Math. Acad. Paedagog. Nyházi.}}, 32(2):203--213, 2016.

\bibitem{C1}
\'A. {Chripk\'o}.
\newblock {Weighted approximation via $\Theta$-summations of Fourier-Jacobi
  series}.
\newblock {\em {Studia Sci. Math. Hungar.}}, 47(2):139--154, 2010.

\bibitem{TE}
T.~{Eisner}.
\newblock {The $\Theta$-summation on local fields}.
\newblock {\em {Ann. Univ. Sci. Budap. Rolando Eötvös, Sect. Comput.}},
  33:137--160, 2011.

\bibitem{FSM}
S.~{Fridli}, P.~{Manchanda}, and A.H. {Siddiqi}.
\newblock {Approximation by {W}alsh-{N}\"orlund means}.
\newblock {\em {Acta Sci. Math.}}, 74(3-4):593--608, 2008.

\bibitem{gog1}
U.~{Goginava}.
\newblock {On the approximation properties of {C}es\`aro means of negative
  order of {W}alsh-{F}ourier series}.
\newblock {\em {J. Approx. Theory}}, 115(1):9--20, 2002.

\bibitem{hew}
E.~{Hewitt} and K.~A. {Ross}.
\newblock {\em {Abstract harmonic analysis 1-2}}.
\newblock Springer, Heidelberg, 1963.

\bibitem{J}
M.A. {Jastrebova}.
\newblock {On approximation of functions satisfying the {L}ipschitz condition
  by arithmetic means of their {W}alsh-{F}ourier series}.
\newblock {\em {Mat. Sb., Nov. Ser.}}, 71(113):214--226, 1966.

\bibitem{T2}
N.~{Memi\'c}, L.-E. {Persson}, and G.~{Tephnadze}.
\newblock {A note on the maximal operators of {V}ilenkin-{N}\"orlund means with
  non-increasing coefficients}.
\newblock {\em {Stud. Sci. Math. Hung.}}, 53(4):545--556, 2016.

\bibitem{MR}
F.~{M\'oricz} and B.~E. {Rhoades}.
\newblock {Approximation by weighted means of {W}alsh-{F}ourier series}.
\newblock {\em {Int. J. Math. Sci.}}, 19(1):1--8, 1996.

\bibitem{MS}
F.~{M\'oricz} and A.~{Siddiqi}.
\newblock {Approximation by N\"orlund means of Walsh-Fourier series}.
\newblock {\em {J. Approx. Theory}}, 70(3):375--389, 1992.

\bibitem{N2}
K.~{Nagy}.
\newblock {Approximation by {N}\"orlund means of quadratical partial sums of
  double {W}alsh-{F}ourier series}.
\newblock {\em {Anal. Math.}}, 36(4):299--319, 2010.

\bibitem{N1}
K.~{Nagy}.
\newblock {Approximation by {N}\"orlund means of double {W}alsh-{F}ourier
  series for {L}ipschitz functions}.
\newblock {\em {Math. Inequal. Appl. }}, 15(2):301--322, 2012.

\bibitem{SWSP}
F.~{Schipp}, W.~R. {Wade}, P.~{Simon}, and J.~{P\'al}.
\newblock {\em {Walsh {S}eries. {A}n {I}ntroduction to {D}yadic {H}armonic
  {A}nalysis}}.
\newblock Adam Hilger, Bristol-New York, 1990.

\bibitem{Sh}
G.~{Shavardenidze}.
\newblock {On the convergence of {C}es\`aro means of negative order of
  {V}ilenkin-{F}ourier series}.
\newblock {\em {Stud. Sci. Math. Hung.}}, 56(1):22--44, 2019.

\bibitem{Sk2}
V.A. {Skvortsov}.
\newblock {Certain estimates of approximation of functions by {C}es\`aro means
  of {W}alsh-{F}ourier series}.
\newblock {\em {Math. Notes}}, 29:277--282, 1981.

\bibitem{TT}
T.~{Tepnadze}.
\newblock {On the approximation properties of {C}es\`aro means of negative
  order of {V}ilenkin-{F}ourier series}.
\newblock {\em {Stud. Sci. Math. Hung.}}, 53(4):532--544, 2016.

\bibitem{Tol}
R.~{Toledo}.
\newblock {On the boundedness of the $L^1$-norm of {W}alsh-{F}ej\'er kernels}.
\newblock {\em {J. Math. Anal. Appl.}}, 457(1):153--178, 2018.

\bibitem{W1}
F.~{Weisz}.
\newblock {$\Theta$-summability of {F}ourier series}.
\newblock {\em {Acta Math. Hung.}}, 103(1-2):139--175, 2004.

\bibitem{Y2}
Sh. {Yano}.
\newblock {On approximation by {W}alsh functions}.
\newblock {\em {Proc. Am. Math. Soc.}}, 2:962--967, 1951.

\bibitem{Y1}
Sh. {Yano}.
\newblock {On {W}alsh-{F}ourier series}.
\newblock {\em {Tohoku Math. J. (2)}}, II(3):223--242, 1951.

\bibitem{Zyg}
A.~{Zygmund}.
\newblock {\em {Trigonometric series. {V}olumes I and II combined. With a
  foreword by {R}obert {F}efferman. 3rd ed.}}
\newblock Cambridge University Press, Cambridge, 2002.

\end{thebibliography}
\bibliographystyle{plain}
\end{document}